\renewcommand*{\@biblabel}[1]{\hfill#1.}
\theoremstyle{plain}
\newtheorem{theorem}{Theorem}
\newtheorem*{theorem*}{Theorem}
\newtheorem{lemma}{Lemma}
\newtheorem*{corollary*}{Corollary}
\newtheorem*{remark*}{Remark}
\begin{document}

\title{\bf Fitting rectangles under vulnerability curves: optimal water flow through plants}

\date{}

\author{Sergiy Koshkin, Michael Tobin, Jeffae Schroff,\\
Matt Capobianco, Sarah Oldfield}

\maketitle

\abstract{We study an optimization problem for a model of steady state water transport through plants that maximizes water flow subject to the constraints on hydraulic conductance due to vulnerability to embolism (air blockage of conduits). The model has an elementary geometric interpretation, and exhibits bottleneck behavior where one of the plant segments limits the overall optimal flow, sometimes in a counterintuitive way. The results show good agreement with experimental measurements and provide support for the hypothesis that leaves serve as a safety buffer protecting stems against excessive embolism.}

\section*{Introduction}

According to the cohesion-tension theory \cite[Ch.3]{TZ}, plants absorb water from the soil and transport it by creating a negative pressure in their leaves lower than in the soil. Plants regulate this pressure by changing the aperture size of stomata, small openings that allow water vapor loss from leaves. Water transport to leaves is essential to a number of plants' vital functions. For instance, carbon gain from atmospheric CO$_2$ optimized in recent literature depends monotonically on the flow \cite{Sper,Wolf}. Therefore, one can expect that the flow would be optimized with respect to several constraints imposed by plants' overall structure and operation. 

Modeling water transport has to take into account that the transport system in plants is not made of continuous pipes, but is a network of conduits formed by the cell walls of single or multiple cells that have died. These conduits are connected by pits in the secondary cell walls where only a pit membrane with small pores separates the conduits from each other. At any time, some conduits in the network are filled with air and do not transport water. When large negative pressures develop in a water filled conduit, air can be pulled through a pore in the pit membrane into the functioning conduit, filling it with air and preventing further water transport through the conduit. These newly embolized conduits reduce the overall hydraulic conductance of the network. 

To restore conductance, the plant would have to refill the air-blocked conduit with water. But this embolism repair is complicated by the fact that the surrounding water filled conduits often remain under negative pressure throughout the diurnal cycle during much of the year \cite{Hol,Wh}. Refilling of conduits while the transport system remains under negative pressure is an area of active research and debate \cite{ZwH,Vent}.  
\begin{figure}[!ht]\label{diagram}
\begin{center}
\includegraphics[width = 0.6\textwidth]{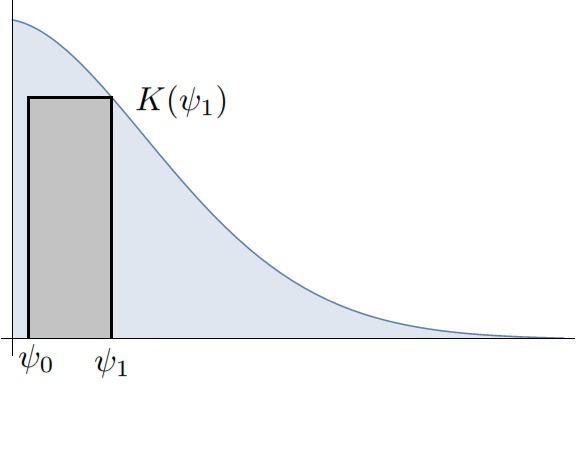}
\end{center}
\vspace{-0.15\textwidth}
\caption{\label{VulCurv} Typical vulnerability curve and a flow rectangle.}
\end{figure}

In our optimality model we assume that conductance is not restored on the relevant (seasonal) time scales. We also neglect water losses and storage, and schematize a plant as a chain of conducting segments with the entire canopy of leaves lumped into a single segment at the top. While admittedly crude, such simplifications are not uncommon in the literature, and for some tree species the measured midday pressures are in a surprisingly good agreement with the predicted optimal values (Section \ref{S5}). Moreover, they lead to an interesting mathematical problem with solution structure that illuminates the distribution of hydraulic conductance between stem and leaves in a plant, which is an active research topic in plant biology, see \cite{Drake, Hao, John, Sack}. The idea that leaves serve as a safety buffer for preventing embolism damage to stems was first hypothesized in \cite{Zim}, and gained broad support in recent years, although now this function is largely attributed to living cells rather than to the embolism of conduits \cite{Scof1}. The rationale would be that leaves are less costly to grow for a plant, and therefore are more disposable. Earlier works, however, produced seemingly contrary results by only comparing maximal leaf and stem hydraulic conductances. For example, a study of $34$ species in \cite{Sack} noted that they are as $3:1$ on average. Part of the explanation is provided by more comprehensive comparisons, see Section \ref{S5}.

The optimality problem is formulated mathematically in Section \ref{S0}. We consider the two segment case in Sections \ref{S1}-\ref{S3} and prove existence and uniqueness results that allow us to formulate a computational algorithm for solving it. The problem for three or more segments is treated in Section \ref{S4}. We apply our algorithm to the data from four plant species and draw conclusions in Section \ref{S5}.

\section{Mathematical model}\label{S0}

To state the problem mathematically it is convenient to change the sign of negative pressures and represent them as a difference of {\it water potentials}. The potential is thus the difference between the normal atmospheric pressure, taken as the baseline, and a pressure in the plant's water column. Under our convention water flows from low to high potential. The Darcy's law then states that the steady state flow of water $F$ through a segment is proportional to the difference $\Delta\psi$ of the water potentials at its ends, $F=K\Delta\psi$, where $K$ is a positive proportionality constant called conductance of the segment. If the plant is modeled by a single segment with $\psi_L$ the water potential in the leaf and $\psi_0$ the water potential in the soil (or root) then $F=K(\psi_L-\psi_0)$. More generally, the plant is split into a series of segments with conductances $K_1,K_2,K_3,...$ and water potentials $\psi_0, \psi_1,\psi_2\,...$ at the nodes where the segments link. Then we get for the steady state flow: 
$$
F=K_1(\psi_1-\psi_0)=K_2(\psi_2-\psi_1)=\dots\,.
$$
The conductances $K_i$ depend on water potentials in both the current and the previous segments, so the problem is nonlinear. 

The highest level of conductance sustainable under water potential $\psi$ is given by a function $K(\psi)$ known as the {\it vulnerability curve}. Experimental procedures for measuring  vulnerability curves are discussed in \cite{Coch}, and they are often fit to Weibull functions $K(\psi)=K_{\max}e^{-\big(\frac{\psi}{p}\big)^\nu}$, where $K_{\max}$ is the maximal conductance in the absence of embolism, see \cite{Ogle}. A typical shape is shown on Fig.\,\ref{VulCurv}. Finally, no restoration of conductance  implies that vulnerability curves function irreversibly, as in magnetic hysteresis: once exposure to potential $\psi_i$ lowered the conductance of the $i$-th segment to $K_i(\psi_i)$ it remains that even for lower potentials. Mathematically, this means that the steady state flow through a segment is $K_i(\psi_i)(\psi_i-\psi_{i-1})$ rather than $\int_{\psi_{i-1}}^{\psi_i}K_i(\psi)\,d\psi$. Thus, we can state the problem as follows: 
\smallskip

\noindent {\bf Optimization problem.} Given the soil potential $\psi_0$ find potentials $\psi_1,\dots,\psi_n:=\psi_L$ to maximize the flow:
\begin{equation}\label{GenProb2}
F=K_1(\psi_1)(\psi_1-\psi_0)=K_2(\psi_2)(\psi_2-\psi_1)=\dots=K_n(\psi_n)(\psi_n-\psi_{n-1})
\end{equation}
\begin{figure}[!ht]
\begin{center}
\includegraphics[width = 0.7\textwidth]{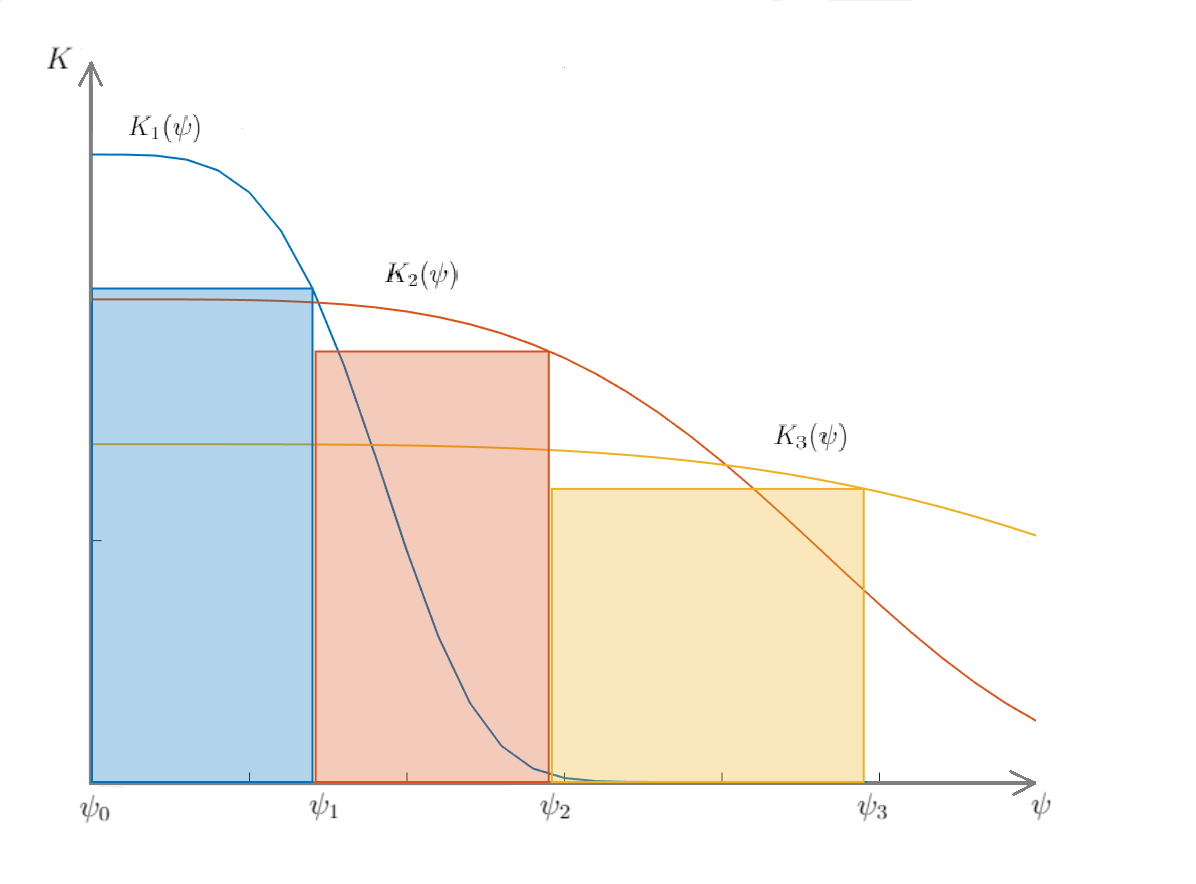}
\end{center}
\vspace{-0.1\textwidth}
\caption{\label{FlowCurvs} Optimal flow rectangles under vulnerability curves.}
\end{figure}
Geometrically, the objective is to maximize the (common) area of rectangles lined up next to each other along the $x$-axis, each one tightly fit under the graph of (its own) function, see Fig. \ref{FlowCurvs}, where the case of two stem segments and one leaf segment is shown. For two segments and piecewise-linear vulnerability curves the problem was studied numerically in \cite{JS}.

Solutions to this problem exhibit bottleneck behavior (see Theorem 4), namely the optimal flow is suboptimal when restricted to an initial chain of segments taken in isolation. This is because the segment following it (the bottleneck) limits the flow size. This suggests testing whether the leaf segment produces the bottleneck to decide whether leaves serve as a safety buffer. This test is more comprehensive than comparing either maximal conductances \cite{Sack}, or stem and leaf water potentials at $50\%$ loss of conductance \cite{Hao}, and may produce answers at variance with them. Computations for some species indicate that leaves can become the bottleneck and limit stem's exposure to high potentials even if the maximal leaf conductance is vastly (over 30:1) larger than the maximal stem conductance. 

\section{Optimal solution for two segments: existence}\label{S1}

When there is only one segment the problem reduces to maximizing a one-variable function $F(\psi_1)=K_1(\psi_1)(\psi_1-\psi_0)$, which reduces to solving a single equation $F'(\psi_1)=0$ when $F$ is differentiable. We will analyze the case of two segments in detail first because it is less cumbersome but already displays the main features of the general case. In this section we will prove existence of an optimal solution under some very mild conditions. Later, under somewhat stronger conditions, we will develop an algorithm for finding the solution(s). 

Let us begin by simplifying the notation somewhat. Namely, we replace $\psi_i$ with $x_i$ and set $f(x):=K_1(x)$, $g(x):=K_2(x)$ to be the vulnerability curves of the stem and the leaf, respectively. They are defined on $[0,\infty)$, non-negative and continuous. Let us also introduce the flow functions 
\begin{equation}\label{FlowFun}
F(x_1):=(x_1-x_0)f(x_1);\ \ \ \ \ G(x_1,x_2):=(x_2-x_1)g(x_2).
\end{equation}

Assume that some $x_0\geq0$, $f$ and $g$ are given. For two segments the optimization problem can now be restated in the standard form: 
\begin{equation}\label{TwoProb}
\text{Maximize $G(x_1,x_2)$ subject to the constraint $F(x_1)=G(x_1,x_2)$.}
\end{equation}
The proof of existence will be based on the Weierstrass's theorem that a continuous function on a closed bounded set always attains its maximal value. If $f$ and $g$ are continuous then so are $F$ and $G$, therefore the constraint set 
\begin{equation}\label{ConSet}
Con:=\{(x_1,x_2)\in\mathbb{R}^2\,\big|\,x_1,x_2\geq0\text{ and }F(x_1)=G(x_1,x_2)\}
\end{equation}
is closed. It is trivially nonempty since we can take $x_2=x_1=x_0$ for the flow of $0$. However, a simple continuity argument shows that we can do better.
\begin{lemma}\label{IVT}
Let $x_0\geq0$, and the functions $f,g$ be non-negative and continuous with $f(x_0),g(x_0)>0$. Then there exist $x_1<x_2$ such that $F(x_1)=G(x_1,x_2)\geq\varepsilon>0$.
\end{lemma}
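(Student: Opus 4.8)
The plan is to exhibit a single admissible point $(x_1,x_2)$ with $x_1<x_2$ and positive common flow, by a two-stage intermediate value argument that uses nothing beyond continuity and the strict positivity of $f,g$ at $x_0$. Everything will be done inside a small interval $[x_0,x_0+\delta]\subseteq[0,\infty)$, so the sign constraints $x_1,x_2\ge 0$ are automatic.

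First I would localize near $x_0$. Put $m:=\min\{f(x_0),g(x_0)\}>0$. By continuity of $f$ and $g$ at $x_0$ there is $\delta>0$ with $f(x)>m/2$ and $g(x)>m/2$ for all $x\in[x_0,x_0+\delta]$; shrinking $\delta$ if necessary (or invoking boundedness of the continuous $f$ on the compact interval) we also get a constant $M$ with $f(x)\le M$ on $[x_0,x_0+\delta]$. Next I would pin down $x_1$: choose $x_1\in\bigl(x_0,\,x_0+\tfrac{\delta}{2}\bigr)$ close enough to $x_0$ that $(x_1-x_0)M<\tfrac{\delta m}{4}$, which is possible since the left side tends to $0$ as $x_1\to x_0^+$. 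For this $x_1$ we get simultaneously $F(x_1)=(x_1-x_0)f(x_1)\ge (x_1-x_0)\tfrac{m}{2}>0$ and $F(x_1)\le (x_1-x_0)M<\tfrac{\delta m}{4}$.

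Now I would run the intermediate value theorem in the second variable. Consider the continuous function $x_2\mapsto G(x_1,x_2)=(x_2-x_1)g(x_2)$ on $[x_1,x_0+\delta]$. At the left end $G(x_1,x_1)=0<F(x_1)$. At the right end $G(x_1,x_0+\delta)=(x_0+\delta-x_1)\,g(x_0+\delta)>\tfrac{\delta}{2}\cdot\tfrac{m}{2}=\tfrac{\delta m}{4}>F(x_1)$, using $x_1<x_0+\tfrac{\delta}{2}$ and $g(x_0+\delta)>m/2$. Since $F(x_1)$ lies strictly between these two values, there is $x_2\in(x_1,x_0+\delta)$ with $G(x_1,x_2)=F(x_1)$, and $x_2\ne x_1$ forces $x_2>x_1$. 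Taking $\varepsilon:=F(x_1)>0$ completes the argument.

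I do not expect a genuine obstacle; the only point requiring care is making the flow $F(x_1)$ small enough to fall inside the range swept out by $G(x_1,\cdot)$, which is exactly why one needs an upper bound on $f$ near $x_0$ (from continuity/compactness) in addition to the lower bounds coming from $f(x_0),g(x_0)>0$. The quantitative constants above are just a way of making ``$x_1$ sufficiently close to $x_0$'' precise; any reader-friendly write-up could instead argue qualitatively that $F(x_0)=0$, $F$ is positive just to the right of $x_0$, and for each such $x_1$ the map $x_2\mapsto G(x_1,x_2)$ vanishes at $x_2=x_1$ and is positive slightly beyond, hence assumes the value $F(x_1)$ in between.
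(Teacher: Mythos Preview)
Your proof is correct, but the route differs from the paper's. You fix $x_1$ first (close to $x_0$ so that $F(x_1)$ is small but positive) and then run the Intermediate Value Theorem in the variable $x_2$ on $G(x_1,\cdot)$. The paper does it the other way around: it fixes $x_2>x_0$ in the region where $f,g$ stay positive and applies the Intermediate Value Theorem in the variable $x_1$ to $h(x):=F(x)-G(x,x_2)$, noting that $h(x_0)=-\,(x_2-x_0)g(x_2)$ and $h(x_2)=(x_2-x_0)f(x_2)$ have opposite signs. The paper's choice is a bit more economical: the endpoint values of $h$ automatically have the right signs with no quantitative tuning, whereas your approach needs the extra upper bound on $f$ near $x_0$ to force $F(x_1)$ below the range swept out by $G(x_1,\cdot)$. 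On the other hand, your version makes the constants explicit and shows clearly how small $\varepsilon$ can be taken, which could be useful if one later wants a quantitative version of Lemma~\ref{Comp}.
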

\begin{proof} Choose $x_2>x_0$ close enough so that $f(x),g(x)$ are strictly positive on $[x_0,x_2]$, and consider $h(x):=F(x)-G(x,x_2)$. Then $h(x_0)=(x_2-x_0)g(x_2)>0$ and $h(x_2)=-(x_2-x_0)f(x_2)<0$. By the Intermediate Value Theorem, there is $x_1\in(x_0,x_2)$ where $h(x_1)=0$, i.e. $F(x_1)=G(x_1,x_2)=(x_1-x_0)f(x_1)>0$.\,$\square$
\end{proof}
Unfortunately, not only can we not expect that $Con$ is bounded but without additional conditions on $f,g$ we can not even expect that an optimal solution exists. If $x_0=1$ and $f(x)=\frac1{\sqrt{x}}$ for $x\geq1$ then $F(x_1)=\frac1{\sqrt{x_1}}(x_1-1)\xrightarrow[x_1\to\infty]{}\infty$, i.e. the flow can be increased indefinitely by raising $x_1$. If $g(x)$ behaves similarly we can match $G(x_1,x_2)$ to arbitrarily large values as well, and there will be no maximum. Fortunately, realistic vulnerability curves decrease much faster, which rules out such behavior.
\begin{lemma}\label{Comp}
Suppose $xg(x)\xrightarrow[x\to\infty]{}0$. Then the maximum of $G(x_1,x_2)$ is attained on $Con$ if and only if it is attained on $Con_{\varepsilon}:=Con\,\cap\{(x_1,x_2)\in\mathbb{R}^2\,\big|\,G(x_1,x_2)\geq\varepsilon\}$ with $\varepsilon$ from Lemma \ref{IVT}, which is bounded.
\end{lemma}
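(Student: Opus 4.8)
The plan is to split the statement into its two assertions: the logical equivalence of ``the maximum of $G$ is attained on $Con$'' and ``the maximum of $G$ is attained on $Con_\varepsilon$'', and the boundedness of $Con_\varepsilon$. For the equivalence I would first observe that it suffices to show the two suprema coincide, $\sup_{Con}G=\sup_{Con_\varepsilon}G$. Indeed, Lemma \ref{IVT} produces a point of $Con$ at which $G\geq\varepsilon$, so $\sup_{Con}G\geq\varepsilon$, while by definition every point of $Con\setminus Con_\varepsilon$ has $G<\varepsilon$; hence the supremum over $Con$ cannot be approached outside $Con_\varepsilon$, giving $\sup_{Con}G=\sup_{Con_\varepsilon}G$. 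Once this equality is in hand, a maximizer over $Con$ automatically lies in $Con_\varepsilon$ (its value is $\geq\varepsilon$), and a maximizer over $Con_\varepsilon$ has value equal to $\sup_{Con}G$, hence is also a maximizer over $Con$; this is the desired ``if and only if''.

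The substantive part is the boundedness of $Con_\varepsilon$, and here is where the hypothesis $xg(x)\to0$ enters. Take any $(x_1,x_2)\in Con_\varepsilon$. From $(x_2-x_1)g(x_2)=G(x_1,x_2)\geq\varepsilon>0$ and $g\geq0$ we get $x_2>x_1$ and $g(x_2)>0$; since also $x_1\geq0$, we have $x_2-x_1\leq x_2$, whence $x_2\,g(x_2)\geq(x_2-x_1)g(x_2)\geq\varepsilon$. Now apply the decay hypothesis: there is $M>0$ with $x\,g(x)<\varepsilon$ for all $x>M$, so the inequality $x_2\,g(x_2)\geq\varepsilon$ forces $x_2\leq M$. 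Then $0\leq x_1<x_2\leq M$ as well, so $Con_\varepsilon\subseteq[0,M]^2$ is bounded.

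The only mild obstacle is bookkeeping: one must make sure the constant $\varepsilon$ is the \emph{same} one supplied by Lemma \ref{IVT} (so that $Con_\varepsilon$ is nonempty and the supremum is genuinely $\geq\varepsilon$), and that $f$ plays no role in the boundedness argument — only the tail behavior of $g$ does, which is exactly what is assumed. I would close by remarking that, since $G$ is continuous, $Con_\varepsilon=Con\cap\{G\geq\varepsilon\}$ is also closed, hence compact and nonempty, so Weierstrass's theorem yields an actual maximizer on $Con_\varepsilon$; by the equivalence just proved this is a maximizer on $Con$, which is the existence conclusion the lemma is designed to feed into.
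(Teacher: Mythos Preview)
Your proof is correct and follows essentially the same route as the paper's: invoke Lemma~\ref{IVT} to force $\sup_{Con}G\geq\varepsilon$ so that any maximizer must lie in $Con_\varepsilon$, then bound $x_2$ via $(x_2-x_1)g(x_2)\leq x_2\,g(x_2)\to 0$ and trap $x_1$ by $0\leq x_1<x_2$. You are simply more explicit about the two directions of the equivalence and append the Weierstrass conclusion that the paper defers to Theorem~\ref{Exist}.
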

\begin{proof} By Lemma \ref{IVT} there exist $(x_1,x_2)\in Con$ such that $G(x_1,x_2)\geq\varepsilon$. Clearly, the maximum on $Con$ can not be less, and therefore is attained on $Con_{\varepsilon}$. But by assumption
$$
G(x_1,x_2)=(x_2-x_1)g(x_2)\leq x_2g(x_2)\xrightarrow[x\to\infty]{}0\,.
$$
Therefore, $G(x_1,x_2)<\varepsilon$ whenever $x_2>R$, where $R$ does not depend on $x_1$. Moreover, if $x_1>x_2$ then $G(x_1,x_2)\leq0$. Thus, if $G(x_1,x_2)\geq\varepsilon$ then $0\leq x_1<x_2\leq R$, and $Con_{\varepsilon}$ is bounded.\,$\square$
\end{proof}
Existence of an optimal solution is now a direct consequence of our lemmas and the Weierstrass's theorem. 
\begin{theorem}\label{Exist}
Suppose $x_0\geq0$, and the functions $f,g$ be non-negative and continuous. Suppose $f(x_0),g(x_0)>0$ and $xg(x)\xrightarrow[x\to\infty]{}0$. Then there exists a solution to the optimization problem \eqref{TwoProb} and the maximum is strictly positive.
\end{theorem}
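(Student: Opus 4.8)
The plan is to assemble the two preceding lemmas with the Weierstrass extreme value theorem, so that the theorem becomes essentially a bookkeeping argument: the genuine difficulties (ruling out runaway flow, and exhibiting a strictly positive competitor) have already been isolated in Lemmas \ref{IVT} and \ref{Comp}. First I would note that the hypotheses are tailored to invoke both lemmas: since $f(x_0),g(x_0)>0$, Lemma \ref{IVT} produces a point $(x_1^*,x_2^*)\in Con$ with $G(x_1^*,x_2^*)\geq\varepsilon>0$; and since $xg(x)\to0$, Lemma \ref{Comp} applies with that $\varepsilon$. Because $f,g$ are continuous, $F$ and $G$ are continuous, so $Con=\{x_1,x_2\geq0\}\cap\{F(x_1)-G(x_1,x_2)=0\}$ is an intersection of closed sets and hence closed, and the superlevel set $\{(x_1,x_2):G(x_1,x_2)\geq\varepsilon\}$ is closed as well; therefore $Con_\varepsilon$ is closed. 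By Lemma \ref{Comp} it is bounded, and by Lemma \ref{IVT} it is nonempty, so $Con_\varepsilon$ is a nonempty compact subset of $\mathbb{R}^2$.

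Next I would apply Weierstrass's theorem to the continuous function $G$ restricted to $Con_\varepsilon$: it attains a maximum there at some $(\bar x_1,\bar x_2)$, with value $M:=G(\bar x_1,\bar x_2)\geq\varepsilon>0$. It then remains to promote this to a global maximum over all of $Con$. This is exactly the content secured by Lemma \ref{Comp}: if some $(x_1,x_2)\in Con$ had $G(x_1,x_2)>M$, then since $M\geq\varepsilon$ that point would lie in $Con_\varepsilon$, contradicting maximality of $M$ on $Con_\varepsilon$. Hence $M=\sup_{Con}G$ and it is attained at $(\bar x_1,\bar x_2)$, which by definition of $Con$ satisfies the constraint $F(\bar x_1)=G(\bar x_1,\bar x_2)$. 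Thus $(\bar x_1,\bar x_2)$ solves \eqref{TwoProb}, and the optimal flow equals $M\geq\varepsilon>0$, giving strict positivity.

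I do not expect a serious obstacle here. The only point that warrants a moment's care is the last one — making explicit that the maximum of $G$ over the compact truncation $Con_\varepsilon$ coincides with the supremum over the possibly unbounded set $Con$ — and this is immediate from $M\geq\varepsilon$ together with the definition of $Con_\varepsilon$. (If one prefers, one can phrase this as a direct invocation of the "if and only if" in Lemma \ref{Comp}, reading the implication "attained on $Con_\varepsilon$ $\Rightarrow$ attained on $Con$" off its proof.) Everything else is verifying that the lemma hypotheses hold, citing closedness and boundedness, and applying Weierstrass.
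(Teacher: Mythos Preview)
Your proposal is correct and follows precisely the approach the paper intends: it states that Theorem \ref{Exist} is a direct consequence of Lemmas \ref{IVT} and \ref{Comp} together with the Weierstrass extreme value theorem, and you have simply spelled out those details (closedness of $Con_\varepsilon$, compactness, and promotion of the maximum from $Con_\varepsilon$ to $Con$). There is nothing to add.
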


\section{Optimal solution for two segments: uniqueness and the bottleneck}\label{S2}

In general, we can not expect that the optimal solution is unique. However, in many cases the vulnerability curves $f,g$ have properties that ensure at least ``partial" uniqueness. Recall that a function is called {\it unimodal} if it has a single maximum \cite{Uni}. It turns out that the flow functions like $F(x):=(x-x_0)f(x)$ are often unimodal for any $x_0\geq0$. When the flow functions are unimodal our problem satisfies a condition that can be called ``optimization by parts": if the flow is maximized overall then it is maximized on at least one of the segments separately. 
\begin{theorem}[{\bf Optimization by parts}]\label{OptPart}
In conditions of Theorem \ref{Exist} let $F$ and $G(x_1,\cdot)$ be unimodal for any $x_0,x_1\geq0$, and let $(x_1^*,x_2^*)$ be an optimal solution. Then $x_1^*$ is a maximizer of $F$ on $[x_0,\infty)$, or $x_2^*$ is a maximizer of $G(x_1^*,\cdot)$ on $[x_1^*,\infty)$. In the second case the solution is unique. 
\end{theorem}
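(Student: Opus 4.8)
The plan is to eliminate $x_2$ and reduce the problem to one variable. For $x_1\ge x_0$ set $M(x_1):=\max_{x_2\ge x_1}G(x_1,x_2)$, the largest flow the leaf segment can sustain once the intermediate potential is fixed at $x_1$; since $G(x_1,\cdot)$ is unimodal and $xg(x)\to0$, this maximum exists and is attained at a unique point $\tilde x_2(x_1)$. Three facts about $M$ will drive everything: (i) $M$ is continuous (a Berge-type argument, using $xg(x)\to 0$ to confine all near-maximizers to a fixed compact set); (ii) $M$ is non-increasing, since $(x_2-x_1)g(x_2)$ decreases in $x_1$ while the admissible set $\{x_2\ge x_1\}$ shrinks; (iii) $M$ is strictly decreasing on any interval on which it is positive. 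With this in hand, a pair $(x_1,x_2)$ is feasible with flow $v>0$ exactly when $x_1\ge x_0$, $F(x_1)=v\le M(x_1)$, and $x_2$ solves $G(x_1,x_2)=v$; so the optimal value is $v^*=\max\{F(x_1):x_1\ge x_0,\ F(x_1)\le M(x_1)\}$, which is positive and attained by Theorem~\ref{Exist}.

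Let $\bar x_1$ be the unimodality-unique maximizer of $F$; note $\bar x_1>x_0$ since $F(x_0)=0<v^*$. Case A: $F(\bar x_1)\le M(\bar x_1)$. Then $\bar x_1$ is feasible, so $v^*=F(\bar x_1)=\max F$, whence every optimal $x_1^*$ satisfies $F(x_1^*)=\max F$, i.e. $x_1^*$ maximizes $F$ — the first alternative. Case B: $F(\bar x_1)>M(\bar x_1)$. Since $F(x_0)=0\le M(x_0)$ (and $M(x_0)>0$ because $g(x_0)>0$) while $F-M$ is continuous and strictly increasing on $[x_0,\bar x_1]$, there is a unique $c\in(x_0,\bar x_1)$ with $F(c)=M(c)$, and $F<M$ on $[x_0,c)$, $F>M$ on $(c,\bar x_1]$. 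Every feasible $x_1$ then has $F(x_1)\le F(c)$: if $x_1\le\bar x_1$, infeasibility on $(c,\bar x_1]$ forces $x_1\le c$, so $F(x_1)\le F(c)$; if $x_1>\bar x_1$, feasibility gives $F(x_1)\le M(x_1)\le M(\bar x_1)\le M(c)=F(c)$. As $c$ is feasible, $v^*=F(c)>0$, hence $f(c)>0$. For uniqueness: if $x_1^*\ne c$ were also optimal, then $x_1^*<c$ contradicts strict increase of $F$ on $[x_0,\bar x_1]$, $c<x_1^*\le\bar x_1$ is infeasible, and $x_1^*>\bar x_1$ forces $M(x_1^*)=M(c)>0$, making $M$ constant and positive on $[c,x_1^*]$, against (iii). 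Hence $x_1^*=c$, the constraint reads $G(c,x_2^*)=F(c)=M(c)=\max_{x_2}G(c,x_2)$, whose only root is $x_2^*=\tilde x_2(c)$; so $x_2^*$ maximizes $G(x_1^*,\cdot)$ and the optimal pair is unique — the second alternative, with uniqueness.

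The main obstacle I expect is establishing the three properties of the auxiliary function $M$, especially its continuity (so the Intermediate Value Theorem delivers the crossing point $c$) and its strict decrease where positive (which is exactly what forces uniqueness in Case B and rules out a rival optimum beyond $\bar x_1$). These are elementary but need care, and the small step that a positive optimal flow forces $f(c)>0$ is the sort of degenerate-value bookkeeping that is easy to miss. As a sanity check, the dichotomy matches the formal Lagrange condition in the smooth interior case: writing the constraint as $F(x_1)-G(x_1,x_2)=0$, the multiplier equation collapses to either $\partial_{x_2}G=0$ (so $x_2^*$ is the mode of $G(x_1^*,\cdot)$) or $F'(x_1)=0$ (so $x_1^*$ is the mode of $F$).
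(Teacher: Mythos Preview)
Your argument is correct and takes a genuinely different route from the paper's. The paper proves the dichotomy by a local perturbation: if $x_1^*$ is not the mode of $F$ and $x_2^*$ is not the mode of $G(x_1^*,\cdot)$, one can push $x_2$ to a value with strictly larger $G$, then nudge $x_1$ in the $F$-increasing direction (available because $F$ is unimodal and $x_1^*$ is off-peak) to restore the constraint at a higher flow, contradicting optimality; uniqueness is then the two-line chain $G(\tilde x_1^*,\tilde x_2^*)\le(\tilde x_2^*-x_1^*)g(\tilde x_2^*)\le\max_{x_2}G(x_1^*,x_2)$, which collapses to equalities and forces $\tilde x_1^*=x_1^*$. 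You instead introduce the envelope $M(x_1)=\max_{x_2}G(x_1,x_2)$ and reduce everything to the one-variable problem of maximizing $F$ on $\{F\le M\}$. This costs you the three auxiliary properties of $M$, but it buys more structure: you do not merely get the dichotomy, you pin down the bottleneck optimizer as the unique crossing point $F(c)=M(c)$, so the bottleneck/non-bottleneck distinction becomes a property of the data $(f,g,x_0)$ rather than of a particular solution, and the picture dovetails with the algorithm the paper sets up in the next section. The paper's proof is shorter and needs no auxiliary object; yours is more global and more informative. One small bookkeeping point: your Case~A includes the borderline $F(\bar x_1)=M(\bar x_1)$, where both alternatives of the theorem hold at once; uniqueness there is immediate (the only feasible pair with flow $v^*$ is $(\bar x_1,\tilde x_2(\bar x_1))$), but you should state it so that the theorem's ``second case implies unique'' claim is covered in full.
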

\begin{proof} Suppose $x_1^*$ is not a maximizer of $F$. If $\displaystyle{\max_{x_2}G(x_1^*,x_2)>G(x_1^*,x_2^*)}$ then there is $\widetilde{x}_2$ such that $G(x_1^*,\widetilde{x}_2)>G(x_1^*,x_2^*)$. By continuity, changing $x_1^*$ slightly in either direction would still yield $G(\widetilde{x}_1,\widetilde{x}_2)>G(x_1^*,x_2^*)$. But since $F$ is unimodal and $x_1^*$ is not the maximizer there is such a choice $\widetilde{x}_1$ that makes $F(\widetilde{x}_1)>F(x_1^*)$. Therefore, $(x_1^*,x_2^*)$ can not be an optimal solution unless $x_2^*$ maximizes $G(x_1^*,\cdot)$.

Let $(\widetilde{x}_1^*,\widetilde{x}_2^*)$ be another optimal solution. If either $\widetilde{x}_1^*$ or $x_1^*$ is a maximizer of $F$ then since $F(\widetilde{x}_1^*)=F(x_1^*)$ and $F$ is unimodal we have $\widetilde{x}_1^*=x_1^*$. If not, we may assume without loss of generality that $\widetilde{x}_1^*\geq x_1^*$. Since $x_2^*$ is a maximizer of $G(x_1^*,\cdot)$ we have
\begin{equation}\label{MaxDecr}
G(\widetilde{x}_1^*,\widetilde{x}_2^*)=(\widetilde{x}_2^*-\widetilde{x}_1^*)g(\widetilde{x}_2^*)
\leq(\widetilde{x}_2^*-x_1^*)g(\widetilde{x}_2^*)\leq\max_{x_2}G(x_1^*,x_2)=G(x_1^*,x_2^*)
\end{equation}
Since the first and the last values are equal both inequalities are equalities, and since by Theorem \ref{Exist} $g(\widetilde{x}_2^*)>0$ we must have $\widetilde{x}_1^*=x_1^*$. Then $\widetilde{x}_2^*=x_2^*$ follows from the unimodality of $G(x_1^*,\cdot)$.\,$\square$
\end{proof}
It is easy to construct examples where $F$ and $G$ are unimodal, $x_1^*$ is the maximizer for $F$, but $x_2^*$ is not unique, indeed this happens generically. This is because neither $x_2^*$ nor $\widetilde{x}_2^*$ have to be maximizers and we should typically find at least two equal flow values on different sides of the maximizers. But it is also clear that the smallest is the one that should be chosen because it minimizes exposure of the leaf segment to high water potentials. So for all practical purposes the solution is unique in both cases. The following lemma gives a simple sufficient condition on a flow function to be unimodal. One can easily check that the functions of interest to us, $f(x)=K(1-\frac{x}{p})$ on $[0,p)$ and $f(x)=Ke^{-(x/p)^\nu}$ on $[0,\infty)$, with $K,p>0$ and $\nu\geq1$, satisfy it. So do exponential-sigmoid functions also used to fit vulnerability curves \cite{Ogle}.
\begin{lemma}\label{Unimod}
Let $f$ be a strictly positive, monotone decreasing differentiable function on $[0,a)$ (possibly $a=\infty$). Suppose also that $xf(x)\xrightarrow[x\to a]{}0$ and $\ln\frac1f$ is convex down on $[0,a)$. Then for any $x_0\in[0,a)$ if $f(x_0)>0$ then $F(x):=(x-x_0)f(x)$ is unimodal on $[x_0, a)$.
\end{lemma}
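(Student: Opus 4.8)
The plan is to pin down the sign of $F'$ and show that it is positive on one subinterval and negative on the complementary one, so that $F$ increases and then decreases exactly once, which is precisely unimodality. Before touching derivatives I would record the boundary picture. Since $f$ is continuous and positive, $F(x_0)=0$ and $F(x)=(x-x_0)f(x)>0$ on $(x_0,a)$; and the hypothesis $xf(x)\to0$ forces $f(x)\to0$ as $x\to a$ (if $a=\infty$ because $f(x)=xf(x)/x$, and if $a<\infty$ because $f(x)=xf(x)/x\to 0/a$), hence $F(x)\to0$ as $x\to a$ as well. So $F$ is continuous on $[x_0,a)$, vanishes at both ends, and is strictly positive in between.

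Next I would differentiate: $F'(x)=f(x)\bigl[1+(x-x_0)\,\phi(x)\bigr]$, where $\phi:=f'/f=(\ln f)'$. Since $f>0$, the sign of $F'$ equals the sign of $\psi(x):=1+(x-x_0)\phi(x)$. The two hypotheses on $f$ now say precisely that $\phi\le0$ (because $f$ is decreasing) and $\phi$ is non-increasing (because $\ln\frac1f=-\ln f$ is convex, so its everywhere-existing derivative $-\phi$ is non-decreasing; being a derivative of a convex function, $\phi$ is moreover continuous). The crux is the claim that $\psi$ is non-increasing on $[x_0,a)$. For $x_0\le s<t<a$ I would use the identity
\[
\psi(t)-\psi(s)=(t-x_0)\bigl(\phi(t)-\phi(s)\bigr)+(t-s)\,\phi(s),
\]
whose first term is $\le0$ (the non-negative factor $t-x_0$ times the non-positive factor $\phi(t)-\phi(s)$) and whose second term is $\le0$ ($t-s>0$ times $\phi(s)\le0$); hence $\psi(t)\le\psi(s)$.

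With $\psi$ non-increasing and $\psi(x_0)=1>0$, the rest is structural. The function $\psi$ cannot stay positive on all of $[x_0,a)$, for then $F$ would be strictly increasing there and could not tend to $0$ at $a$; so there is $m\in(x_0,a)$ with $\psi>0$ on $[x_0,m)$ and $\psi\le0$ on $(m,a)$, whence $F$ is strictly increasing on $[x_0,m]$. On $(m,a)$ we have $F'\le0$; moreover $\psi$ can vanish at only a single point there, since if $\psi\equiv0$ on a nondegenerate subinterval the displayed identity forces $\phi$ to vanish there, hence $f$ to be constant, contradicting the strict decrease of $f$. A function whose derivative is $\le0$ and not identically zero on any subinterval is strictly decreasing, so $F$ is strictly decreasing on $[m,a)$. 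Therefore $F$ attains its maximum only at $x=m$, i.e. $F$ is unimodal.

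I expect the monotonicity of $\psi$ to be the one genuinely non-routine step: $\psi$ is not the derivative of anything visibly monotone, and one must marry the \emph{sign} of $\phi$ to the \emph{monotonicity} of $\phi$ in exactly the right way, which is what the two-term splitting above accomplishes. The only other point requiring a bit of care is upgrading "$F'\le 0$" to strict decrease on the right branch, so that the maximizer is genuinely unique; this is exactly where the strict monotonicity of $f$ is used.
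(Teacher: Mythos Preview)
Your proof is correct and uses essentially the same idea as the paper: convexity of $\ln(1/f)$ makes $-\phi=(\ln\frac1f)'$ non-decreasing, so the critical-point condition $x-x_0=1/(-\phi(x))$ (equivalently your $\psi(x)=0$) can hold at at most one point. The paper states this as a one-line comparison of the strictly increasing left side with the non-increasing right side, whereas you repackage it as the monotonicity of $\psi$ via the two-term splitting and then carry out a fuller sign analysis of $F'$ (including the strict decrease past the peak); the extra care is welcome but not a genuinely different route. One small remark: your final appeal to the \emph{strict} decrease of $f$ is unnecessary, since once the splitting forces $\phi\equiv0$ on a subinterval you already have $\psi\equiv1\neq0$ there, a contradiction in itself.
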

\begin{proof} We have $F(x_0)=0$, $F(x)\leq xf(x)\xrightarrow[x\to a]{}0$, and $F(x)$ is positive on $[x_0,a)$, so it must have a maximum there. Since $f$ is differentiable at a point of maximum $F'(x)=f(x)+f'(x)(x-x_0)=0$, or equivalently, 
$$
x-x_0=-\frac{f'(x)}{f(x)}=\frac1{(\ln\frac1f)'(x)}.
$$
By conditions on $f$ the right hand side is non-negative, and since $\ln\frac1f$ is convex down $(\ln\frac1f)'$ is monotone increasing. Hence its reciprocal is monotone decreasing. Since $x-x_0$ is strictly monotone increasing they can be equal at at most one point, the maximum. 
\end{proof}

A simple argument shows that two distinct cases arise even without assuming unimodality. Suppose the maximal value of $F(x_1)$ on $[x_0,\infty)$ is attained at $x_1^*$, it gives the flow carrying capacity of the stem segment. If the leaf segment can transport this much flow, i.e. $G(x_1^*,x_2)=F(x_1^*)$ has a solution $x_2^*$ on $[x_1^*,\infty)$, then $(x_1^*,x_2^*)$ solve the original optimization problem. Let us call this the {\it non-bottleneck case}. If the maximal value of $G(x_1^*,x_2)$ is strictly less than $F(x_1^*)$ the leaf serves as the water flow {\it bottleneck}, it limits the overall carrying capacity of the plant. 

\section{Optimal solution for two segments: algorithm}\label{S3}
 
The existence proof we gave above was non-constructive. To find a solution assume that $f$, and therefore $F$, is differentiable. Then solving the non-bottleneck case reduces to solving two non-linear equations, the first one being $F'(x_1)=0$, and the second one $G(x_1^*,x_2)=F(x_1^*)$. It is possible that one or both equations have multiple solutions, in which case the solution with minimal $x_i^*$ should be selected to reduce exposure to high water potentials. 

If $g$, and then $G$, is also differentiable, by optimization by parts in the bottleneck case the solution satisfies $\frac{\partial G}{\partial x_2}=0$. This equation should be solved in conjunction with the constraint $F(x_1)=G(x_1,x_2)$. Unfortunately, the equations do not decouple in this case and we have to solve them as a non-linear system. However, $\frac{\partial G}{\partial x_2}=0$ can be explicitly solved for $x_1$, namely $x_1=x_2+\frac{g(x_2)}{g'(x_2)}$, which upon substitution into the constraint equation produces a single non-linear equation for $x_2$. Note that if $g$ satisfies the conditions of Lemma \ref{Unimod} then the right hand side is a monotone increasing function of $x_2$, i.e. solution with the smallest $x_2$ will also have the smallest $x_1$. Our considerations can be summarized as follows.
\begin{theorem}\label{TwoPart} Let $x_0\geq0$ and $f,g$ be non-negative and differentiable. Then any optimal solution $(x_1^*,x_2^*)$ solves at least one of the equations $F'(x_1^*)=0$ (non-bottleneck case) or $\frac{\partial G}{\partial x_2}(x_1^*,x_2^*)=0$ (bottleneck case), in addition to the constraint $F(x_1^*)=G(x_1^*,x_2^*)$.
\end{theorem}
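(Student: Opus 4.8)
The plan is to treat Theorem \ref{TwoPart} as a first-order necessary condition for the constrained problem \eqref{TwoProb}, proved via the implicit function theorem: near a candidate optimum the constraint surface is, generically, a graph $x_2=\eta(x_1)$, and this collapses the two-variable problem to a one-variable one. Write $\phi(x_1,x_2):=F(x_1)-G(x_1,x_2)$, so that $Con$ from \eqref{ConSet} equals $\{x_1,x_2\ge0,\ \phi=0\}$ and, on $Con$, the objective $G$ coincides with the function $x_1\mapsto F(x_1)$. Since $f,g$ are differentiable, so are $F,G,\phi$, with $\partial_{x_2}\phi=-\partial_{x_2}G$. An optimal $(x_1^*,x_2^*)$ satisfies the constraint $F(x_1^*)=G(x_1^*,x_2^*)$ simply because it is feasible, so only the derivative alternative needs proof.

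Next I would record the non-degeneracy the argument relies on. The feasible point $(x_0,x_0)$ has flow $0$, so the optimal flow is $\ge0$; assume it is $>0$ (automatic, e.g., under the hypotheses of Theorem \ref{Exist} by Lemma \ref{IVT}; I would either exclude the residual flow-$0$ case by a standing assumption or dispose of it by elementary bookkeeping). Then $F(x_1^*)=(x_1^*-x_0)f(x_1^*)>0$ and $G(x_1^*,x_2^*)=(x_2^*-x_1^*)g(x_2^*)>0$, and since $f,g\ge0$ this forces $x_2^*>x_1^*>x_0\ge0$. Hence $(x_1^*,x_2^*)$ lies in the interior of $\{x_1,x_2\ge0\}$, which is all the argument below uses.

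Now dichotomize on $\partial_{x_2}G(x_1^*,x_2^*)$. If it vanishes, we are in the bottleneck case and there is nothing left to prove. Otherwise $\partial_{x_2}\phi(x_1^*,x_2^*)\ne0$, so the implicit function theorem provides a neighborhood $I$ of $x_1^*$ and a differentiable $\eta\colon I\to\mathbb{R}$ with $\eta(x_1^*)=x_2^*$ and $\phi(x_1,\eta(x_1))\equiv0$ on $I$; by interiority and continuity of $\eta$, every $(x_1,\eta(x_1))$ with $x_1\in I$ lies in $Con$. Along this slice the objective equals $G(x_1,\eta(x_1))=F(x_1)$, and optimality of $(x_1^*,x_2^*)$ gives $F(x_1)\le F(x_1^*)$ for all $x_1\in I$. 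Thus $x_1^*$ is an interior local maximizer of the differentiable $F$, so $F'(x_1^*)=0$: the non-bottleneck case.

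The only step requiring genuine care — the main obstacle — is the interiority of $(x_1^*,x_2^*)$, since it is what lets the implicit-function graph furnish two-sided competitors in $x_1$; the implicit function theorem application and the reduction of the objective along the constraint are routine. An equivalent route is the Lagrange multiplier rule: if $\nabla\phi(x_1^*,x_2^*)=0$ its second coordinate already gives $\partial_{x_2}G(x_1^*,x_2^*)=0$, while otherwise there is $\mu$ with $(1+\mu)\nabla G=\mu\nabla F$ at $(x_1^*,x_2^*)$, whose second coordinate $(1+\mu)\,\partial_{x_2}G=0$ forces $\partial_{x_2}G(x_1^*,x_2^*)=0$ unless $\mu=-1$, in which case the first coordinate gives $F'(x_1^*)=0$. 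I would present the implicit-function version so as to avoid discussing a constraint qualification, but the same interiority remark is still needed.
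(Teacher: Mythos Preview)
Your argument is correct and is essentially the Lagrange-multiplier route the paper itself flags right after the theorem (``One can also derive these equations by applying the Lagrange multiplier method''); you have simply made it precise via the implicit function theorem. The paper's primary justification is different in spirit: it summarizes the preceding discussion, which splits into the non-bottleneck case (where $x_1^*$ maximizes $F$, so $F'(x_1^*)=0$) and the bottleneck case, the latter handled by appealing to the optimization-by-parts perturbation argument of Theorem~\ref{OptPart}. Your route has the advantage of not needing unimodality of $F$ or $G(x_1^*,\cdot)$, matching exactly the hypotheses actually stated in Theorem~\ref{TwoPart}; the paper's route via Theorem~\ref{OptPart} tacitly imports those extra assumptions. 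One small technical remark: the standard implicit function theorem needs $\phi\in C^1$, whereas the theorem only assumes $f,g$ differentiable; either strengthen the hypothesis to continuous differentiability, or note that your Lagrange-multiplier alternative (in its Fritz John form, covering the degenerate-gradient case as you do) avoids this issue.
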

One can also derive these equations by applying the Lagrange multiplier method to \eqref{FlowFun}. Note that there is no expectation of uniqueness for solutions to the optimality equations. Even if we are in conditions of Theorem \ref{OptPart} the solution obtained from the system above may be spurious, $\frac{\partial G}{\partial x_2}=0$ may pick out any critical point. Even if it is a global maximum for a given $x_1^*$, e.g. if $G$ is unimodal, it may not be the overall maximum. Even for linear $f,g$ such $x_1^*$ is a solution to a quadratic equation and hence not unique. Therefore, care has to be taken in the numerical procedure to ensure that the true maximum is found. If several overall maxima exist the solution with the minimal $x_1^*$ should be selected. 

Based on the above discussion we can formulate the following algorithm, where we assume that $f$ and $g$ satisfy the conditions of Lemma \ref{Unimod}.
\vspace{0.5em}

\noindent{\bf Algorithm:}
\vspace{-0.5em}
\begin{enumerate}
\item Solve $F'(x_1)=f(x_1)+f'(x_1)(x_1-x_0)=0$ to get $\widetilde{x}_1$.
\item Solve $\frac{\partial G}{\partial x_2}(\widetilde{x}_1,x_2)=g(x_2)+g'(x_2)(x_2-\widetilde{x}_1)=0$ to get $\widetilde{x}_2$.
\item If  $F(\widetilde{x}_1)\leq G(\widetilde{x}_1,\widetilde{x}_2)$ set $x_1^*=\widetilde{x}_1$ and solve $F(x_1^*)=G(x_1^*,x_2)$ to get $x_2^*$. Return $(x_1^*,x_2^*)$ (non-bottleneck case) and stop.
\item Otherwise, solve the system $\begin{cases}x_1=x_2+\frac{g(x_2)}{g'(x_2)}\\F(x_1)=G(x_1,x_2).\end{cases}$  Iterate on $[x_0,\widetilde{x}_2]$ to find the solution with the smallest possible $x_2$. Return the final $(x_1^*,x_2^*)$ (bottleneck case) and stop.
 \end{enumerate}

\section{Optimal solution for multiple segments}\label{S4}

Let $x_0\geq0$ and $K_i(x)$ be given, and set $G_i(x_{i-1},x_i)=(x_i-x_{i-1})K_i(x_i)$. Then the multi-segment version of problem \eqref{GenProb2} can be stated as follows.
\smallskip

\noindent {\bf Optimization problem.} Find $x_1,x_2,\cdots,x_n$ that maximize $G_n(x_{n-1},x_n)$ subject to the constraints
\begin{equation}\label{GenProb}
G_1(x_0,x_1)=G_2(x_1,x_2)=\cdots=G_n(x_{n-1},x_n)\,.
\end{equation}
The analysis is largely analogous to the $n=2$ case so we will simply state the result and sketch a proof.

\begin{theorem}\label{MultPart} Suppose $K_i(x)$ are non-negative and continuous, $K_i(x_0)>0$, and $xK_n(x)\xrightarrow[x\to\infty]{}0$. Then there exists a solution $(x_1^*,x_2^*,\cdots,x_n^*)$ to \eqref{GenProb} with a positive maximum and there exists a $1\leq k\leq n$ (bottleneck) such that $x_k^*$ is a local maximizer of $G_n(x_{k-1}^*,\cdot)$. If two solutions have the (first) bottleneck at $k$, and $G_k(c,x)$ is unimodal for any $c\geq0$, then they coincide up to it.
\end{theorem}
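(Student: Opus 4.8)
The plan is to iterate along the chain the three ingredients of the two‑segment analysis: the existence argument of Lemmas~\ref{IVT} and~\ref{Comp} with Theorem~\ref{Exist}, the ``optimization by parts'' mechanism of Theorem~\ref{OptPart}, and a uniqueness argument. \emph{For existence}, I would first prove the $n$‑segment analogue of Lemma~\ref{IVT}: choose $\rho>0$ so small that every $K_i$ is strictly positive on $[x_0,x_0+\rho]$ (possible since the $K_i$ are continuous and $K_i(x_0)>0$), then solve $G_1(x_0,x_1)=c$, then $G_2(x_1,x_2)=c$, and so on by the Intermediate Value Theorem, with a common flow $c=:\varepsilon>0$ small enough that $x_1,\dots,x_n$ all lie in $[x_0,x_0+\rho]$; this gives a feasible point with $x_0<x_1<\dots<x_n$ and flow $\ge\varepsilon$. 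Then, exactly as in Lemma~\ref{Comp}, $xK_n(x)\to0$ gives $G_n(x_{n-1},x_n)\le x_nK_n(x_n)\to0$, so $G_n\ge\varepsilon$ forces $x_n\le R$ for some $R$ independent of the other coordinates; since every flow equals $G_n\ge\varepsilon>0$ we have $(x_i-x_{i-1})K_i(x_i)>0$, hence $x_0\le x_1<\dots<x_n\le R$, so $Con_{\varepsilon}$ is bounded and (by continuity) closed. Weierstrass then gives a maximizer on $Con_{\varepsilon}$, which is a maximizer on $Con$ with value $\ge\varepsilon>0$.

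\emph{For existence of a bottleneck}, I would prove by induction on $n$ the slightly stronger statement that any \emph{local} maximizer $(x_1^*,\dots,x_n^*)$ of $G_n$ on $Con$ with positive flow $c^*$ has some $x_k^*$ that is a local maximizer of $G_k(x_{k-1}^*,\cdot)$; the global maximizer from the previous step is in particular local. For $n=1$ the constraint in~\eqref{GenProb} is empty and the claim holds with $k=1$. For the inductive step: if $x_n^*$ is a local maximizer of $G_n(x_{n-1}^*,\cdot)$, take $k=n$. Otherwise, since $x_n^*$ does not even attain it, $c^*=G_n(x_{n-1}^*,x_n^*)$ is strictly below $\Gamma_n(x_{n-1}^*)$, where $\Gamma_i(c):=\sup_{x\ge c}G_i(c,x)$, i.e.\ the last segment has spare capacity. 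The key point is that near $(x_1^*,\dots,x_n^*)$ the $n$‑th constraint is then not binding: every $(y_1,\dots,y_{n-1})$ feasible for the $(n-1)$‑segment problem, lying close to $(x_1^*,\dots,x_{n-1}^*)$ and carrying flow $>c^*$, can be completed by some $y_n$ close to $x_n^*$ to a feasible point of the $n$‑segment problem with the same flow. Hence $(x_1^*,\dots,x_{n-1}^*)$ is a local maximizer of the $(n-1)$‑segment problem, and the inductive hypothesis supplies $k\le n-1$.

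\emph{For uniqueness up to the first bottleneck}, let $(x_i^*)$ and $(\widetilde{x}_i^*)$ be two optimal solutions with first bottleneck at the same index $k$, so $x_k^*$ maximizes $G_k(x_{k-1}^*,\cdot)$ locally, $\widetilde{x}_k^*$ maximizes $G_k(\widetilde{x}_{k-1}^*,\cdot)$ locally, and both carry the common optimal flow $c^*>0$. Since $G_k(c,\cdot)$ is unimodal for every $c\ge0$, a local maximizer is the unique global maximizer, so $c^*=\Gamma_k(x_{k-1}^*)=\Gamma_k(\widetilde{x}_{k-1}^*)$. Because $G_k(c,x)=(x-c)K_k(x)$ decreases strictly in $c$ wherever $K_k(x)>0$, evaluating $G_k(c_1,\cdot)$ at the maximizer for $c_2>c_1$ shows $\Gamma_k$ is strictly decreasing wherever it is positive; hence $x_{k-1}^*=\widetilde{x}_{k-1}^*$, and then $x_k^*=\widetilde{x}_k^*$ by uniqueness of the maximizer. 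Finally, running $x_{i-1}^*=x_i^*-c^*/K_i(x_i^*)$ — legitimate since $(x_i^*-x_{i-1}^*)K_i(x_i^*)=c^*>0$ forces $K_i(x_i^*)>0$ — downward from $i=k$ to $i=1$ gives $x_i^*=\widetilde{x}_i^*$ for all $0\le i\le k$.

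The existence step is a verbatim lift of Lemmas~\ref{IVT}--\ref{Comp}, and the uniqueness step is purely algebraic once the monotonicity of $\Gamma_k$ is in hand, so neither should cause trouble. The delicate point — already visible in the perturbation inside the proof of Theorem~\ref{OptPart} — is the inductive reduction: one must coordinate the perturbation of the upstream variable $x_{n-1}$ with the increase in flow so that the completing value $y_n$ of the last segment stays close to $x_n^*$, i.e.\ that a local optimum of the $n$‑segment problem genuinely restricts to a local optimum of the $(n-1)$‑segment problem when the last segment is slack. This rests on continuity/lower semicontinuity of the capacity function $\Gamma_n$ and on $x_n^*$ failing to be a local maximizer, and is where I expect the argument to need the most care.
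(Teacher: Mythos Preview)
Your proposal is correct and follows the paper's proof closely in the existence and uniqueness parts. The one organizational difference is in the bottleneck step: the paper argues directly by contradiction---assuming no $x_i^*$ is even a local maximizer on its segment, it perturbs $x_n^*$ upward and then propagates the small increase back through the chain, using at each step that $x_i^*$ is not a local maximizer to absorb the required adjustment---whereas you set this up as an induction on $n$, peeling off the last segment when it is slack and reducing to a local maximizer of the $(n{-}1)$-segment problem. Both rest on the same perturbation idea and the same continuity facts; your inductive packaging isolates the delicate step (that a local optimum with a slack last segment restricts to a local optimum upstream) more explicitly, and indeed the paper is breezier about precisely this point than you are. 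For uniqueness, your capacity function $\Gamma_k$ and its strict monotonicity are a repackaging of the paper's chain $G_k(\widetilde{x}_{k-1}^*,\widetilde{x}_k^*)\le(\widetilde{x}_k^*-x_{k-1}^*)K_k(\widetilde{x}_k^*)\le\max_{x}G_k(x_{k-1}^*,x)$, and your backward recursion $x_{i-1}^*=x_i^*-c^*/K_i(x_i^*)$ is exactly the paper's downward induction written out.
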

\begin{proof} By a continuity argument, we can select $x_1\leq x_2\leq\cdots\leq x_n$ so that $G_1(x_0,x_1)=G_2(x_1,x_2)=\cdots=G_n(x_{n-1},x_n)\geq\varepsilon>0$, and the optimization can be confined to the set defined by these constraints, which is closed. Since $G_n(x_{n-1},x_n)\leq x_nK_n(x_n)\xrightarrow[x_n\to\infty]{}0$ it is also bounded, so by the Weierstrass's theorem the maximum is attained on it.

Suppose that none of $x_1^*,x_2^*,\cdots,x_n^*$ maximizes the value on its segment, even locally. Then there is $\widetilde{x}_n$ such that $G_n(x_{n-1}^*,\widetilde{x}_n)>G_n(x_{n-1}^*,\widetilde{x}_n)$. Slight change of $x_{n-1}^*$ to $\widetilde{x}_{n-1}$ in either direction will still preserve the inequality. Since none of $x_i^*$ is a maximizer we can match a sufficiently small deviation on other segments and generate $\widetilde{x}_1,\cdots,\widetilde{x}_n$ such that 
$$
G_1(\widetilde{x}_0,\widetilde{x}_1)=\cdots=G_n(\widetilde{x}_{n-1},\widetilde{x}_n)>G_n(x_{n-1}^*,x_n^*), 
$$
which contradicts that $(x_1^*,x_2^*,\cdots,x_n^*)$ is a maximizer. Therefore, at least one $x_k^*$ maximizes $G_k(x_{k-1}^*,\cdot)$.

Consider solutions $x_i^*$ and $\widetilde{x}_i^*$ with the first bottleneck at $k$. We may assume without loss of generality that $\widetilde{x}_{k-1}^*\geq x_k^*$. Then 
\begin{multline*}
G_k(\widetilde{x}_{k-1}^*,\widetilde{x}_k^*)=(\widetilde{x}_k^*-\widetilde{x}_{k-1}^*)K_k(\widetilde{x}_k^*)
\leq(\widetilde{x}_k^*-x_{k-1}^*)K_k(\widetilde{x}_k^*)\\
\leq\max_{x_k}(x_k-x_{k-1}^*)K_k(x_k)=G_k(x_{k-1}^*,x_k^*)\,.
\end{multline*}
Since the first and the last values are by assumption equal both inequalities are equalities, and since 
$K_k(\widetilde{x}_k^*)$ must be positive we have $\widetilde{x}_{k-1}^*=x_{k-1}^*$. Since $k$ is a bottleneck and $G_k(x_{k-1}^*,\cdot)$ is unimodal we also have $\widetilde{x}_k^*=x_k^*$. Since the values of $G$ are equal and positive we have:
\begin{multline*}
G_{k-1}(\widetilde{x}_{k-2}^*,\widetilde{x}_{k-1}^*)=(\widetilde{x}_{k-1}^*-\widetilde{x}_{k-2}^*)K_{k-1}(\widetilde{x}_{k-1}^*)
=(x_{k-1}^*-\widetilde{x}_{k-2}^*)K_{k-1}(x_{k-1}^*)\\
=(x_{k-1}^*-x_{k-2}^*)K_{k-1}(x_{k-1}^*)=G_{k-1}(x_{k-2}^*,x_{k-1}^*)\,,
\end{multline*}
i.e. $\widetilde{x}_{k-2}^*=x_{k-2}^*$. By induction, this holds for all $i\leq k$. $\square$
\end{proof}
It follows that in the general case as well the optimal solution(s) can be found by solving systems obtained by appending $\frac{\partial G}{\partial x_i}=0$ for $i=1,\cdots,n$ to the constraint equations \eqref{GenProb}. This generically produces only finitely many potential solutions, whose flows should be compared to select the maximal one(s). Among those the one with smallest $x_i^*$ should be selected.

\section{Computations and conclusions}\label{S5}

It would be interesting to compare optimal distributions of water potentials in a variety of plants, but unfortunately, despite the vast literature on the subject, information on stem and leaf vulnerability curves is rarely reported for the same plant. Parameterizing a multi-segment model proved to be impossible given the available data. We were able to  parametrize a two-segment model for three eastern US tree species based on \cite{John}, and for {\it Helianthus annuus} (sunflower) based on \cite{Rico} and \cite{Sim}. Analytic fits, which were not given in the papers, were made according to the standard methodology \cite{Ogle}. As one can see from Table \ref{4Plant}, these species already display a wide range of stem to leaf maximal conductance ratios, from about $30$ in {\it Helianthus annuus} to about $1/30$ in {\it Pinus virginiana}. These formulas were used as inputs for applying the optimality model.
\begin{table}[!ht]
\centering
\label{my-label}
\setlength\extrarowheight{5pt}
\begin{tabular}{|l|l|l|}
\hline
               & $f(x)$ (stem) & $g(x)$ (leaf) 
            \\ \hline
{\it H. annuus}          & $11.9e^{-\left(\frac{x}{3.34}\right)^{1.69}}$
                    & $0.4(1-\frac{x}{1.64})$        
\\ \hline
{\it A. rubrum}           & $25.29e^{-\left(\frac{x}{4.22}\right)^{4.67}}$
                    & $29.2e^{-\left(\frac{x}{1.76}\right)^{10.24}}$            
\\ \hline
{\it L. tulipifera}      & $4.27e^{-\left(\frac{x}{3.26}\right)^{4.46}}$
                    & $9.8e^{-\left(\frac{x}{1.29}\right)^{4.91}}$               
\\ \hline
{\it P. virginiana} & $1.07e^{-\left(\frac{x}{4.59}\right)^{4.11}}$
                    & $32.8e^{-\left(\frac{x}{0.95}\right)^{2.15}}$                
\\ \hline
\end{tabular}
\caption{Stem and leaf vulnerability curves of four plant species. Values in ${\rm mmol\,m^{-2}\,s^{-1}\,MPa^{-1}}$ (normalized by the total leaf area) for the tree species, and in ${\rm mmol\,s^{-1}\,MPa^{-1}}$ (bulk) for {\it H. annuus}.}\label{4Plant}
\end{table}

We assumed that the soil potential is $\psi_0=0$. Computations were performed using mostly MATLAB's function fsolve, which requires specifying initial guesses for the potentials. The function was run iteratively with initial guesses equally spaced over admissible ranges of values estimated from the vulnerability curves. Fsolve did not work for P. virginiana, whose optimality system is numerically ill-conditioned due to almost constant stem vulnerability curve in the relevant range, Fig. \ref{PinVul}. Instead, we found the optimal values by using bisection on the leaf potential range.
\begin{figure}[!ht]
\begin{center}
\includegraphics[width = 0.7\textwidth]{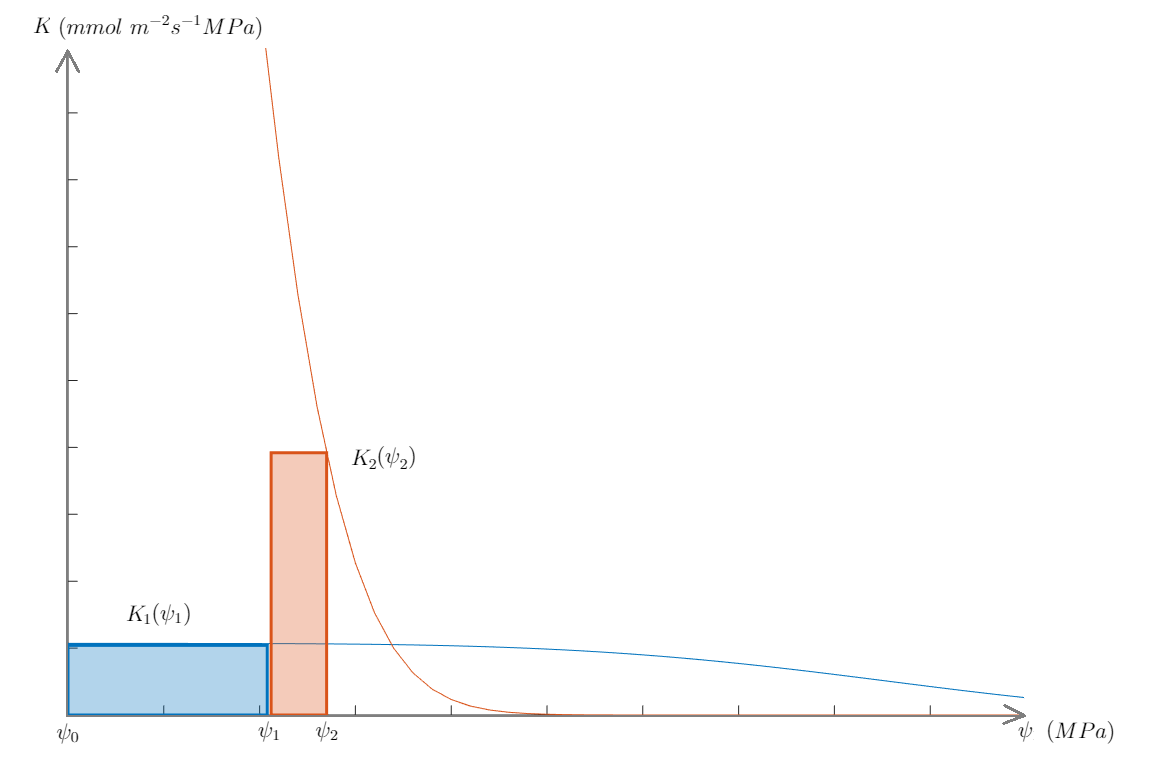}
\end{center}
\vspace{-0.05\textwidth}
\caption{\label{PinVul} Vulnerability curves of {\it P. virgniana}, and the optimal flow rectangles.}
\end{figure}

For each solution pair found flow values were compared and the maximal ones selected. The results of applying our optimization model are presented in Table \ref{4Pot}. In addition to the optimal values of the soil and the leaf water potentials we also list the overall optimal flow and the optimal flow through the stem taken in isolation to highlight the bottleneck effect.

One can see that in all cases the leaf segment served as the bottleneck for the flow. Note also that the optimal stem potentials are much lower than the values that would cause significant loss of conductance in the stem (roughly given by the values under $x$ in the exponents of Table \ref{4Plant}). 
\begin{table}[!ht]
\centering
\label{PotMax}
\setlength\extrarowheight{5pt}
\begin{tabular}{|l|l|l|l|l|}
\hline
         & $\psi^{{\rm max}}_S$ & $\psi^{{\rm max}}_L$ & $F^{{\rm max}}$ & $F_{{\rm stem}}^{{\rm max}}$
            \\ \hline
{\it H. annuus}          & 0.013  & 0.86
& 0.015  & 1.99 \\ \hline
{\it A. rubrum}          & 0.73  & 1.50
& 18.48  &  61.94\\ \hline
{\it L. tulipifera}      & 0.65  & 1.12       
& 2.78  & 7.96\\ \hline
{\it P. virginiana} & 1.06  & 1.35              
& 1.13  & 2.73 \\ \hline
\end{tabular}
\caption{Stem and leaf potentials (in MPa) that maximize the flow, the maximal flow, and the maximal flow through stems in isolation (per the total leaf area for the tree species).}\label{4Pot}
\end{table}

For the tree species \cite{John} reports the values of stem and leaf water potentials measured at midday. For {\it A. rubrum} they are $0.73$ and $1.53$ MPa, respectively, for {\it L. tulipifera} $0.65$ and $1.17$ MPa, and for {\it P. virginiana} $0.98$ and $1.56$ MPa. These are in a remarkably good agreement with the theoretically optimal values from Table \ref{4Pot}, especially considering how idealized the model assumptions were. 

Overall, our computations support the leaf safety buffer hypothesis, and suggest a better metric for testing it than comparing maximal conductances of or changes of potential across leaf and stem segments. As one can see from Table \ref{4Pot} the latter metrics may not detect the bottleneck behavior of leaves.

\bigskip
\noindent{\large\bf Acknowledgments:} This work was conceived during the summer 2017 REU program at the University of Houston-Downtown, and is funded by the National Science Foundation grant 1560401.

\end{document}